\def\author#1{\gdef\autrun{\def\and{\unskip, }#1}\gdef\@author{#1}}
\newtheorem{thm}{Theorem}[section]
\newtheorem{lemma}[thm]{Lemma}
\newtheorem{prop}[thm]{Proposition}
\theoremstyle{definition}
\newtheorem{rem}[thm]{Remark}
\numberwithin{equation}{section}
\def\N{\mathbb {N}}
\def\Z{\mathbb {Z}}
\def\C{\mathbb {C}}
\def\P{\mathbb {P}}
\def\B{\mathcal {B}}
\def\D{\mathbb {D}}
\def\XX{\mathring X}
\def\YY{\mathring Y}
\def\ff{\mathring f}
\def\uu{\mathring u}
\def\TT{\mathring T}
\def\QQQ{\mathring Q}
\def\ZZ{\mathring Z}
\def\SS{\mathring S}
\def\RR{\mathring R}
\def\pp{\mathring p}
\def\tt{\mathring \tau}
\begin{document}


\baselineskip=17pt

\title{Non trivial limit distributions for transient renewal chains}
\author{Dalia Terhesiu\footnote{
University of Exeter,
North Park Road, Exeter,
UK, EX4 4QF.
Email: daliaterhesiu@gmail.com.}}

\date{December, 2016}

\maketitle

\begin{abstract}
In this work we study the asymptotic of renewal sequences associated with certain transient renewal Markov chains
and enquire about the existence of limit laws in this set up.
\end{abstract}

\section{Introduction}

In the first part of this work we are interested in the asymptotic behaviour of renewal sequences
associated with transient Markov  renewal chains with regularly varying tails of the return time to the state $[0]$.
The precise meaning of the transient renewal chains considered here is given in  Subsection~\ref{subsec-trans} (in particular, see equation~\eqref{eq-tail}).
In this set up, we show that up to a constant, independent of the index of regular variation, the renewal sequences are asymptotically equivalent
to the tails of the return to state $[0]$: see Proposition~\ref{prop-main2} (and its weaker version Proposition~\ref{prop-main}) in Section~\ref{sec-limrenseq}.
The result in Proposition~\ref{prop-main2} is implicit in the work~\cite{DoneyKor}, which focuses on transient random walks on $\Z^d$, $d\ge 1$. In short, Proposition~\ref{prop-main2} is a result of similar flavour
to that in~\cite[Theorem 4]{DoneyKor}.
The analytical proof of~\cite[Theorem 4]{DoneyKor}(in its full generality) in~\cite[Section 2]{DoneyKor}
relies on~\cite[Theorem 1]{Choverall}, of which proof is based on deep Banach algebra techniques. The proof of the present  Proposition~\ref{prop-main2} is entirely elementary.

In the second part, restricting to indices of regular variation that, provided that the renewal chain is recurrent, would imply it is null recurrent, we enquire about the existence of limit laws.
The main result of this paper,  Theorem~\ref{thm-arc} in Section~\ref{sec-arcsine}, shows the existence of an arcsine law for the transient chain; the proof of this result exploits the asymptotic
behaviour of the renewal sequence obtained in Proposition~\ref{prop-main2}.
In Section~\ref{sec-ratio}, we provide an asymptotic characterization of the random variable $S_n$ describing the number of visits to the state $[0]$
in the interval $[0,n]$ when appropriately scaled: see Proposition~\ref{prop-surv}.

We believe that the techniques in this work can be extended to dynamical systems, in which any form of independence fails. 
Typical systems that, apart from independence,
resemble a renewal chain are the so called interval maps with indifferent fixed points such as the one studied in~\cite{LiveraniSaussolVaienti99}. The task of extending the present
results to such systems is beyond the scope of this work, but once accomplished it could offer an alternative to the results in~\cite{DemersFernandez}.

\section{Set up. Notation}

\subsection{Renewal chain, induced renewal chain}
\label{subsec-renrec}

Let  $(X_n)_{n \geq 0}$, $X_n \in \N_0=\N\cup \{0\}$
be a Markov renewal chain
with transition probabilities
$$
p_{\ell, k} := \P(X_{n+1} = k | X_n = \ell) = 
\begin{cases}
 f_k & \ell = 0,\\
 1 & k = \ell-1,\\
 0 & \text{otherwise.}
\end{cases}
$$
We assume $\sum_k f_k=1$ and recall that depending on the asymptotics of $\sum_{k>n}f_k$,  $(X_n)_{n \geq 0}$ is a positive recurrent  or a null-recurrent renewal chain
(see, for instance,~\cite{Feller66}).

Let $X = \N_0^{\N_0}$ and let  $T:X \to X$ be the shift map.
Then any cylinder $[e_0e_1...e_{k-1}]$ has measure
$\mu([e_0e_1...e_{k-1}]) = \mu([e_0]) p_{e_0e_1} \cdots p_{e_{k-2} e_{k-1}}$.
This can be computed if the initial distribution $\mu([j]), j \in \N_0$,
is given. The Markov measure $\mu$ is $T$ invariant.

Let $Y = [0] = \{ x \in X : x_0 = 0\}$, and decompose
$$
Y = \cup_{k \geq 0} C_k, \quad \text{ where } C_k = [0,k,k-1,k-2,\dots, 0].
$$
The cylinders $C_k$ are pairwise disjoint, and their
measures are given by
\[
\mu(C_{k}) = \mu(Y) p_{0,k} p_{k,k-1} \cdots p_{1,0} = \mu(Y) f_k.
\] 

We recall the definition of the \emph{induced shift on $Y$} and associated 'induced renewal chain'.
For $y\in Y$, let $\tau(y) = \min\{ n \geq 1 : T^n(y) \in Y\}$ and $T_Y = T^\tau$. The probability measure
$\nu=\mu(Y)^{-1}\mu|_Y$ is  $T_Y$ invariant. We note that 
$C_k = \{ y \in Y : \tau(y) = k+1\}$ can be regarded as the shift on the space 
$(\{ C_k \}_{k\ge 0})^{\N_0}$.

Define the induced Markov chain
$(U_n)_{n \geq 0}$, $U_n \in \{ C_k \}_{k \geq 0}$,
with transition probabilities:
\begin{eqnarray}
\label{eq-matrix-indh}
\nonumber \hat p_{\ell, k} &=& \P(U_{n+1} = C_k | U_n = C_\ell)
= \frac{\P(U_{n+1} = C_k \wedge U_n = C_\ell)}{\P(U_{n+1} = C_k)} =  \\
&=& \frac{ \P(C_\ell \wedge T_Y^{-1} (C_k)) }{ \P(U_{n+1} = C_k) } = 
\frac{ p_{0,\ell} p_{\ell,\ell-1} \cdots p_{1,0} p_{0,k}}{p_{0,\ell}}
= f_k.
\end{eqnarray}
Note that $\hat p_{\ell, k}$ is independent of $\ell$.

The induced renewal chain $(U_n)_{n \geq 0}$ with above
transition probabilities $\hat p_{\ell, k}$ is positive recurrent.
To see this, fix $k\ge 1$
and let $\nu^* = \frac{1}{\mu(C_k)} \mu|C_k$ and $\varphi:C_k\to \N$
with $\varphi(y) := \min\{ n \geq 1 : T_Y^n(y) \in C_k\}$ 
be the first return time of $T_Y$ to $C_k$.
Since $\P(U_n = k) = \sum_{\ell} \P(U_n = k | U_{n-1} = \ell) \P(U_{n-1} = \ell)
= \sum_{\ell} \hat p_{\ell,k} \P(U_{n-1}=\ell) =  \sum_{\ell} f_k \P(U_{n-1}=\ell) = f_k$, we have
\begin{eqnarray*}
\nu^*(\varphi \geq n) &=&
\frac{1}{\mu(C_k)} \sum_{m \geq n} \mu(y \in C_k : \varphi(y) = m) \\
&=&
\frac{1}{\mu(C_k)} \sum_{m \geq n} \P(U_0 = C_k \wedge
U_j \neq C_k, 0 < j < m \wedge U_m = C_k) \\
&=&
\frac{1}{\mu(C_k)} \sum_{m \geq n} \mu(C_k) (1-f_k)^{m-1} f_k = (1-f_k)^n.
\end{eqnarray*}
Hence $\varphi$ has an exponential distribution, which shows that $(U_n)_{n \geq 0}$ is positive recurrent (since  $\sum_n\nu^*(\varphi \geq n)<\infty$).

\subsection{Introducing transience, 'holes' in the the original chain $(X_n)_{n \geq 0}$}
\label{subsec-trans}

Recall that $T:X\to X$ is the original shift
and $T_Y=T^\tau:Y\to Y$ is the induced shift with $Y = [0] = \{ x \in X : x_0 = 0\}$. Throughout we assume that
\begin{align}
\label{eq-ap}
g.c.d. \{\tau|_{C_k}, k\geq 0\}=1,
\end{align}
which ensures that $(X_n)_{n \geq 0}$, $X_n \in \N_0$ is aperiodic.

We introduce a hole $H$ in $X$ with $H\subset Y$ and thus transience\footnote{This type of rule for introducing transience/holes in Markov chains was suggested to me by Roland Zweim{\"u}ller. In particular, the results in
Section~\ref{sec-limrenseq} answer his questions. I wish to thank him for useful discussions on this topic.}, as follows. Let $\mathring{X}=X\setminus H$ and $\mathring{Y}=Y\setminus H$. Set ${\XX}^n=\cap_{i=0}^n T^{-i}\XX$ and define  $\TT=T|_{{\XX}}$
s.t. the first return time $\tt$ of $\TT$ to $\YY$ satisfies
\begin{equation}\label{eq-tail}
\nu(\tt=n)=p\nu(\tau=n)=p f_{n-1}:=\ff_{n-1}, n\ge 1.
\end{equation}
Here we recall that $\nu=\mu(Y)^{-1}\mu|_Y$ is the $T_Y$ invariant probability measure. In fact, due to the rule above (of introducing a hole in $X$), $\nu$ is also $\TT_{\YY}=\TT^{\tt}$ invariant. To see this, let $Q_Y$ be the transition matrix
for induced renewal chain $(U_n)_{n \geq 0}$ and note that this is an infinite matrix with $(f_0,f_1,f_2,\dots)$ in every row. Given the set up of the previous subsection, $\nu$ is the left eigenvector
of $Q_Y$ (with eigenvalue $1$). But, the transition matrix for the modified chain (after introducing a hole)  is simply $\QQQ_Y=p Q_Y$. While the eigenvalue changes from $1$ to $p$,
the left eigenvector $\nu$ remains same.

In what follows we are interested in the asymptotics of the renewal sequence associated with the transient renewal chain $(\XX_n)_{n \geq 0}$, $\XX_n \in \N_0$
with transition probabilities
\[
\pp_{\ell, k} := \P(\XX_{n+1} = k | \XX_n = \ell) = 
\begin{cases}
 pf_k & \ell = 0,\\
 1 & k = \ell-1,\\
 0 & \text{otherwise.}
\end{cases}
\]

We start by recalling the renewal equation, which can be obtained word by word as in the recurrent case (see, for instance,~\cite{Feller49}).
For $n\in\N$, let $\tt_n=\sum_{j=0}^{n-1}\tt\circ \TT_{\YY}^j$. Recall that the sequence $(\ff_k)_{k\geq 1}$ is defined in~\eqref{eq-tail} and define the renewal sequence
\begin{equation}\label{eq-reniid}
\uu_0=1,\quad \uu_n=\P\Big(\exists\,  k\le n\mbox{ such that } \sum_{j=0}^k\tt_j=n\Big)=\sum_{j=1}^n \ff_j \uu_{n-j}.
\end{equation}

For $z\in\bar\D$, set $\ff(z)=\sum_1^\infty \ff_n z^n$ and $\uu(z)=\sum_0^\infty \uu_n z^n$. Since, by assumption $\sum_{k\ge 1} \ff_k=p<1$ and~\eqref{eq-ap} holds, we have
that
\begin{equation}\label{eq-reniid2}
\uu(z)=(1-\ff(z))^{-1}
\end{equation}
is well defined on the whole of $\bar\D$.

\section{Non trivial limits for the renewal sequence $\uu_n$}
\label{sec-limrenseq}

The first result below gives the asymptotics of the tail renewal sequence, that is $\sum_{j>n}\uu_j$, where $(\uu_j)_{j\ge 1}$ is the renewal sequence associated
 with the chain $(\XX_n)_{n \geq 0}$, $\XX_n \in \N_0$ introduced in Subsection~\ref{subsec-trans}.
 Throughout this section, we assume the set up of Subsection~\ref{subsec-trans}, in particular~\eqref{eq-tail}  and suppose that ~\eqref{eq-ap} holds.

\begin{prop}\label{prop-main} 
Suppose that $f_n=O(n^{-(\beta+1)})$, for some $\beta>0$.
Then
\[
\sum_{j>n}\uu_j = (1-p)^{-2}\sum_{j>n}\ff_j(1+o(1))=  p  (1-p)^{-2}\nu(\tau>n)(1+o(1)).
\]
\end{prop}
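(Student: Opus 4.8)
The plan is to work with the generating functions from~\eqref{eq-reniid2} and extract tail asymptotics via an Abelian/Tauberian-type argument adapted to the transient (defective) setting. Write $\ff(z)=\sum_{n\ge1}\ff_n z^n$ with $\ff(1)=p<1$, so that $\uu(z)=(1-\ff(z))^{-1}$ is analytic on a neighbourhood of $z=1$ in $\bar\D$, with $\uu(1)=(1-p)^{-1}$. The key observation is that $\sum_{j>n}\uu_j$ and $\sum_{j>n}\ff_j$ are the tail sequences whose generating functions are $(1-z)^{-1}(\uu(1)-\uu(z))$ and $(1-z)^{-1}(\ff(1)-\ff(z))$ respectively (up to constants), so it suffices to compare the local behaviour of $\uu(1)-\uu(z)$ and $\ff(1)-\ff(z)$ as $z\to1$. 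From $\uu(z)=(1-\ff(z))^{-1}$ one gets the algebraic identity
\begin{equation*}
\uu(1)-\uu(z)=\frac{\ff(z)-\ff(1)}{(1-\ff(z))(1-\ff(1))}=\frac{1}{1-p}\,\uu(z)\,\bigl(\ff(z)-\ff(1)\bigr),
\end{equation*}
and since $\uu(z)\to(1-p)^{-1}$ as $z\to1$, this says $\uu(1)-\uu(z)\sim (1-p)^{-2}\bigl(\ff(1)-\ff(z)\bigr)$ in the sense that their ratio tends to $1$.

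Next I would convert this into a statement about the coefficient tails. The hypothesis $f_n=O(n^{-(\beta+1)})$ gives $\ff_n=pf_{n-1}=O(n^{-(\beta+1)})$, hence $\sum_{j>n}\ff_j=p\,\nu(\tau>n)$ is regularly varying-dominated; more to the point, the sequence $(\ff_j)$ is in the natural domain where the map ``generating function singularity at $1$'' $\leftrightarrow$ ``tail of coefficients'' is an equivalence. Concretely, I would invoke a Karamata-type Tauberian theorem, or — to keep the proof ``entirely elementary'' as advertised — the elementary comparison lemma that if $a_n,b_n\ge0$ with $\sum a_n,\sum b_n<\infty$ and the tail of one is eventually monotone-comparable, then $A(1)-A(z)\sim c(B(1)-B(z))$ as $z\uparrow1$ forces $\sum_{j>n}a_j\sim c\sum_{j>n}b_j$, provided the $b_j$-tail does not oscillate too wildly. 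Applying this with $a_n=\uu_n$, $b_n=\ff_n$, $c=(1-p)^{-2}$ yields $\sum_{j>n}\uu_j=(1-p)^{-2}\sum_{j>n}\ff_j(1+o(1))$, and then substituting $\sum_{j>n}\ff_j=p\sum_{j>n-1}f_j=p\,\nu(\tau>n)(1+o(1))$ (the shift by one is absorbed since $f_n=o(\sum_{j>n}f_j)$ under the $O(n^{-(\beta+1)})$ bound together with, implicitly, regular variation) gives the stated $p(1-p)^{-2}\nu(\tau>n)(1+o(1))$.

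The main obstacle is the passage from the generating-function relation $\uu(1)-\uu(z)\sim(1-p)^{-2}(\ff(1)-\ff(z))$ back to the coefficient tails: an Abelian statement is automatic, but the Tauberian direction needs a regularity (non-oscillation) hypothesis on the $\ff_j$. The $O(n^{-(\beta+1)})$ bound alone controls the size but not the oscillation, so I expect the proof either to implicitly assume $\nu(\tau>n)$ is regularly varying (the natural reading of ``regularly varying tails'' in the introduction), or to exploit the specific convolution structure $\uu_n=\sum_{j=1}^n\ff_j\uu_{n-j}$ directly: writing $\sum_{j>n}\uu_j=\sum_{j>n}\ff_j+\sum_{j>n}\sum_{i=1}^{j}\ff_i\uu_{j-i}$ and splitting the double sum according to whether $i$ or $j-i$ is the ``large'' index, one bounds the contribution of two simultaneously large indices by $o(\sum_{j>n}\ff_j)$ using summability of $\uu$ and the tail bound, and resums the rest to recover $\uu(1)\sum_{j>n}\ff_j=(1-p)^{-1}\sum_{j>n}\ff_j$ at first order — iterating (or using $\uu(1)=(1-\ff(1))^{-1}$) upgrades the constant to $(1-p)^{-2}$. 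I would carry out this direct convolution estimate as the core of the argument, since it is self-contained and matches the paper's stated aim of an elementary proof, and handle the reduction $\sum_{j>n}\ff_j=p\,\nu(\tau>n)(1+o(1))$ as a routine final step.
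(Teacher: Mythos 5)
Your proposal is correct in outline and, in its committed form (the direct convolution estimate), takes a genuinely different route from the paper. The paper stays at the generating-function level: it writes $\sum_{n\ge0}\big(\sum_{j>n}\uu_j\big)z^n$ as $(z-1)^{-1}$ times an expression in $\ff(z)-\ff(1)$ and $(1-\ff(z))^{-1}$, expands about $z=1$ to isolate the main term $(1-p)^{-2}\sum_{n}\big(\sum_{j>n}\ff_j\big)z^n$, and controls the remainder---whose numerator is $(\ff(z)-\ff(1))^2/(z-1)$---via the convolution Lemma~\ref{lemma-abstr} together with Wiener's lemma (which supplies the pointwise bound $\uu_n=O(n^{-(\beta+1)})$). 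Your coefficient-level argument on $\sum_{j>n}\uu_j=\sum_{i+k>n}\ff_i\uu_k$ is the classical defective-renewal computation; it amounts to reading the identity $\sum_n\big(\sum_{j>n}\uu_j\big)z^n=(1-p)^{-1}\uu(z)\sum_n\big(\sum_{j>n}\ff_j\big)z^n$ coefficientwise, and it buys you independence from Wiener's lemma, since mere summability of $(\uu_k)$ kills the region where both indices are large. You are also right that the pure Abel--Tauber route from $\uu(1)-\uu(z)\sim(1-p)^{-2}(\ff(1)-\ff(z))$ fails without extra regularity, and right to flag that the final step ``error $=o(\sum_{j>n}\ff_j)$'' needs a matching lower bound $\sum_{j>n}\ff_j\ge cn^{-\beta}$ (e.g.\ regular variation), which $f_n=O(n^{-(\beta+1)})$ alone does not give; the paper's proof carries the same implicit assumption. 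Two small repairs to your sketch: the term $\sum_{j>n}\ff_j$ you list separately is already the $i=j$ (i.e.\ $\uu_0$) contribution of the double sum, so it should not be added twice; and the first-order resummation actually yields $\uu(1)\sum_{j>n}\ff_j+p\sum_{j>n}\uu_j+o(\cdot)$, so the cleanest way to land on the constant is to solve this relation for $\sum_{j>n}\uu_j$, giving $(1-p)^{-1}\uu(1)=(1-p)^{-2}$, rather than to ``iterate''.
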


\begin{proof} Compute that
\begin{align*}
\uu(z)-\uu(1)=\sum_{n=0}^\infty\uu_n (z^n-1)=(z-1)\sum_{n=0}^\infty(\sum_{j>n}\uu_j)z^n.
\end{align*}
Together with~\eqref{eq-reniid2}, the above equation gives
\begin{align*}
\sum_{n=0}^\infty(\sum_{j>n}\uu_j)z^n& =(z-1)^{-1}(1-\ff(z))^{-2}(\ff(z)-\ff(1))\\
&=(z-1)^{-1}(1-\ff(1))^{-2}(\ff(z)-\ff(1))\\
&+(1-\ff(1))^{-3}A(z)\Big(1-(1-\ff(1))^{-1}(\ff(z)-\ff(1))\Big)^{-2}\\
&=(1-p)^{-2}\sum_1^\infty (\sum_{j>n}\ff_j) z^n+(1-p)^{-3}A(z)\Big(1-(1-p)^{-1}(\ff(z)-\ff(1))\Big)^{-2},
\end{align*}
where $A(z)=C(z-1)^{-1}(\ff(z)-\ff(1))^2$, for $C>0$.
By Lemma~\ref{lemma-abstr}, the coefficients of $A(z)$ are $o(\sum_{j>n}\ff_j)$.
By Wiener's lemma, the coefficients of $(1-\ff(z))^{-1}$, and thus of
$\Big(1-(1-p)^{-1}(\ff(z)-\ff(1))\Big)^{-2}$, are $O(n^{-(\beta+1)})$.
Convolving, we obtain that the coefficients of  $A(z)(1-(1-\ff(1))^{-1}(\ff(z)-\ff(1))^{-2}$
are $o(\sum_{j>n}\ff_j)$. The conclusion follows.
\end{proof}

The next result gives the asymptotics of $\uu_n$ under a stronger assumption on the asymptotic behaviour of $f_n$.

\begin{prop}\label{prop-main2}
Suppose that $nf_n=C\sum_{j>n}f_j(1+o(1))$ and that $f_n=O(n^{-(\beta+1)})$,  for some $C>0$ and $\beta>0$.
Then
\[
\uu_n = (1-p)^{-2}\ff_n(1+o(1))= p  (1-p)^{-2}\nu(\tau=n)(1+o(1)).
\]
\end{prop}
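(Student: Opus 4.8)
The plan is to run the same generating-function computation as in the proof of Proposition~\ref{prop-main}, but now extract the asymptotics of the coefficients $\uu_n$ directly rather than those of the tail sums $\sum_{j>n}\uu_j$. Starting from~\eqref{eq-reniid2} we write
\[
\uu(z)=(1-\ff(z))^{-1}=(1-\ff(1))^{-1}+(1-\ff(1))^{-2}(\ff(z)-\ff(1))+(1-\ff(1))^{-3}(\ff(z)-\ff(1))^2\bigl(1-(1-\ff(1))^{-1}(\ff(z)-\ff(1))\bigr)^{-1},
\]
i.e. $\uu(z)=(1-p)^{-1}+(1-p)^{-2}(\ff(z)-\ff(1))+R(z)$, where $R(z)=(1-p)^{-3}(\ff(z)-\ff(1))^2\bigl(1-(1-p)^{-1}(\ff(z)-\ff(1))\bigr)^{-1}$. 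The $n$-th coefficient of the first two terms is exactly $(1-p)^{-2}\ff_n$ for $n\ge 1$, which is the claimed main term, so everything hinges on showing that the $n$-th coefficient of $R(z)$ is $o(\ff_n)$.

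To control $R(z)$ I would proceed as follows. By Wiener's lemma (exactly as invoked in the proof of Proposition~\ref{prop-main}), the coefficients of $(1-\ff(z))^{-1}$ are $O(n^{-(\beta+1)})$, and the same then holds for $\bigl(1-(1-p)^{-1}(\ff(z)-\ff(1))\bigr)^{-1}$. The remaining factor is $(\ff(z)-\ff(1))^2$; since $\ff(z)-\ff(1)=-\sum_{n\ge 1}(\sum_{j>n-1}\ff_j - \sum_{j>n}\ff_j)\cdots$ — more usefully, $\ff_n=pf_{n-1}=O(n^{-(\beta+1)})$, so the coefficients of $(\ff(z)-\ff(1))^2$ are, by a standard convolution/Karamata-type estimate under the regular-variation hypothesis $nf_n=C\sum_{j>n}f_j(1+o(1))$, of order $o(\ff_n)$ — this is precisely where Lemma~\ref{lemma-abstr} (the ``$o$ of the tail'' estimate used in Proposition~\ref{prop-main}) is applied, combined with the elementary fact that under the stated assumption $n\ff_n\sim C'\sum_{j>n}\ff_j$, so that a bound of the form $o(\sum_{j>n}\ff_j)$ on a coefficient sequence upgrades to $o(n^{-1}\sum_{j>n}\ff_j)=o(\ff_n)$ once one knows the sequence is also suitably monotone or regularly varying. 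Convolving the $O(n^{-(\beta+1)})$ sequence against the $o(\ff_n)$ sequence, and using that $\ff_n$ is regularly varying with index $-(\beta+1)<-1$ (so it is subexponential-type and dominates its own convolution tails), yields that the coefficients of $R(z)$ are $o(\ff_n)$. Finally, $(1-p)^{-2}\ff_n=(1-p)^{-2}pf_{n-1}=p(1-p)^{-2}\nu(\tau=n)$ since $\nu(\tau=n)=f_{n-1}$, giving the second equality.

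The main obstacle is the convolution bookkeeping in the last step: one must pass from an $o\bigl(\sum_{j>n}\ff_j\bigr)$ bound on coefficients (which is what Lemma~\ref{lemma-abstr} naturally delivers) to an $o(\ff_n)$ bound on coefficients, and this transition genuinely uses the strengthened hypothesis $nf_n=C\sum_{j>n}f_j(1+o(1))$ — without it the conclusion is only at the level of tails (Proposition~\ref{prop-main}). Concretely, I expect to invoke the hypothesis to say $f_n$ is regularly varying of index $-(\beta+1)$, hence $\sum_{j>n}f_j\sim \beta^{-1}nf_n$, so dividing a tail-level estimate by $n$ and using regular variation of $f_n$ to absorb the $(1+o(1))$ factors converts it into the pointwise estimate; then the convolution of a regularly-varying-index-$(-(\beta+1))$ sequence with an $o$-of-itself sequence is again $o$ of the original by the standard dominated-convergence argument for subexponential sequences. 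Assembling these pieces and checking the error terms do not accumulate across the geometric series hidden in $\bigl(1-(1-p)^{-1}(\ff(z)-\ff(1))\bigr)^{-1}$ is the only real work; the algebraic decomposition of $\uu(z)$ is immediate.
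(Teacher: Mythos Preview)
Your algebraic decomposition of $\uu(z)$ is fine, and the identification of the main term is correct. The gap is in the error estimate. Lemma~\ref{lemma-abstr} bounds the coefficients of $(1-z)^{-1}A(z)B(z)$, not of $A(z)B(z)$; applied with $A=B=\ff(1)-\ff(z)$ it only tells you that the coefficients of $(1-z)^{-1}(\ff(z)-\ff(1))^2$ are $o\bigl(\sum_{j>n}\ff_j\bigr)$. From this you cannot conclude that the coefficients of $(\ff(z)-\ff(1))^2$ itself are $o(\ff_n)$: your proposed ``upgrade'' from $o(\sum_{j>n}\ff_j)$ to $o(\ff_n)$ is not valid without a priori regularity of the error sequence, and you have none. (Concretely, what you need is the subexponential relation $(\ff*\ff)_n-2p\ff_n=o(\ff_n)$; this is true under regular variation, but establishing it is exactly the Chover--Ney--Wainger route the paper is written to bypass.)

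The paper's device for avoiding this is to differentiate first: write $\uu_n=n^{-1}C_{n-1}$ with $C(z)=\uu'(z)=(1-\ff(z))^{-2}\ff'(z)$, and expand $(1-\ff(z))^{-2}$ about $z=1$. The leading term is $(1-p)^{-2}\ff'(z)$, whose $n$-th coefficient is $(n+1)\ff_{n+1}$; the remainder $D(z)$ has as its essential factor $B(z)=(\ff(1)-\ff(z))\sum n\ff_n z^n$. Now the hypothesis $n\ff_n=C\sum_{j>n}\ff_j(1+o(1))$ is used \emph{inside} $B(z)$ to rewrite $\sum n\ff_n z^n$ as $C\,(z-1)^{-1}(\ff(1)-\ff(z))$ plus a series with $o(\sum_{j>n}\ff_j)$ coefficients, so that Lemma~\ref{lemma-abstr} applies verbatim to give $B_n=o(\sum_{j>n}\ff_j)$ and hence $D_n=o(\sum_{j>n}\ff_j)$. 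Only at the very end does one divide by $n$, and the hypothesis converts $n^{-1}o(\sum_{j>n}\ff_j)$ into $o(\ff_n)$. The differentiation buys exactly the factor of $n$ that lets a tail-level estimate (all Lemma~\ref{lemma-abstr} can deliver) suffice; your direct expansion forfeits that factor and is then forced to prove a pointwise convolution estimate that the available tools do not give.
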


\begin{rem}
 The above assumption holds under the assumption of regular variation for the sequence $f_n=\nu(\tau=n)$,
that is if $f_n=\ell(n)n^{-(\beta+1)}$ for $\ell$ a slowly varying function.
\end{rem}

\begin{proof} By definition $\uu_n$ is the coefficient of $(1-\ff(z))^{-1}$, so it is $n^{-1}C_n$,
where $C_n$ is the coefficient of $C(z)=\frac{d}{dz}((1-\ff(z))^{-1})$. Compute that
\begin{align*}
 C(z)=(1-\ff(z))^{-2}\frac{d}{dz}(\ff(z))&=(1-\ff(1))^{-2}\Big(\sum_1^\infty n\ff_{n+1} z^n+(\sum_0^\infty \ff_{n+1} z^n\Big)\\
 &+
\tilde C\Big (1-\ff(1))^{-3}B(z)\big(1-(1-\ff(1))^{-1}(\ff(z)-\ff(1))\Big)^{-2},
\end{align*}
where $\tilde C>0$ (independent of $p$) and $B(z)=(\ff(1)-\ff(z))\sum_0^\infty n\ff_n z^n$ . 

Put $D(z)=\tilde C\Big (1-\ff(1))^{-3}B(z)\big(1-(1-\ff(1))^{-1}(\ff(z)-\ff(1))\Big)^{-2}=\sum_0^\infty D_nz^n$ and note that
\[
C_n=n\ff_{n+1}+\ff_{n+1}+O(D_n).
\]
By assumption, $nf_n=C\sum_{j>n}f_j(1+o(1))$. We claim that $D_n=o(\sum_{j>n}\ff_j)$ and the conclusion follows.

To prove the claim we note that by Wiener's lemma,
the coefficients of $\Big(1-(1-\ff(1))^{-1}(\ff(z)-\ff(1))\Big)^{-2}$ are $O(n^{-(\beta+1)})$.
Hence, it suffices to show that
the coefficients $B_n$ of $B(z)$ are $o(\sum_{j>n}\ff_j)$.

Since $nf_n=C\sum_{j>n}f_j(1+o(1))$, we have $n\ff_n=C\sum_{j>n}\ff_j(1+o(1))$. Thus, using the definition of $B(z)$,
\begin{align*}
B(z)&=\frac{(\ff(1)-\ff(z))^2}{z-1}+(\ff(1)-\ff(z))\sum_1^\infty o\Big(\sum_{j>n}\ff_j\Big)z^n.
\end{align*}

By Lemma~\ref{lemma-abstr}, the coefficients of the first term are $o(\sum_{j>n}\ff_j)$. By assumption the coefficients of
$\ff(1)-\ff(z)$ are $O(n^{-(\beta+1)})$ and thus, the coefficients of the second term are $o(\sum_{j>n}\ff_j)$, as required.
~\end{proof}

\section{An arcsine law for $\beta\in (0,1)$}
\label{sec-arcsine}

Recall  that $(\XX_n)_{n \geq 0}$, $\XX_n \in \N_0$ is the transient renewal chain introduced in Subsection~\ref{subsec-trans}
with associated shift $\TT:\XX\to \XX$.
Proposition~\ref{prop-main2} allows us to obtain the following arcsine law. 
Let
\[
\ZZ_n(x):=\max\{0\leq j\leq n: \TT^{j}(x)\in\YY\},
\]
be the last visit of the orbit of $x$ under the  shift $\TT$ to $\YY$ in the interval $[0,n]$. 
In what follows, $\B(\beta,1-\beta)$ is the standard Beta distribution with parameters $\beta, 1-\beta$. Also,
we let $[\,\,\,]$ denote the integer part.

\begin{thm}
\label{thm-arc}
Assume the setting of Proposition~\ref{prop-main2} with  $\ff_n=Cn^{-(\beta+1)}(1+o(1))$, for some $C>0$. Let
$\beta\in (0,2)$ and set  $q=1/(1+2\beta)$. Then
 \[
  \Big(\frac{\ZZ_{[n^q]}}{n}\Big)^{1/q}\to^{\nu_0}\B(\beta,1-\beta),
 \]
where the convergence
is in measure, for any probability measure absolutely continuous w.r.t. $\nu_0= C^{-2}q p^{-1}(1-p)^{2}\nu$.
\end{thm}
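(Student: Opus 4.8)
The plan is to derive the arcsine law from a local limit theorem for $\ZZ$ combined with the tail asymptotics of the renewal sequence in Proposition~\ref{prop-main2}. First I would fix the probabilistic picture: under the measure $\nu$ (extended appropriately to $\XX$), the event $\{\ZZ_m = j\}$ decomposes as the event that $\TT^j(x)\in\YY$ and then the orbit of $T_Y^{\,\cdot}$-type does not return to $\YY$ in the remaining $m-j$ steps. Concretely, one writes
\[
\nu\big(\ZZ_m=j\big)=\uu_j\cdot\ff^{c}_{m-j},
\]
where $\uu_j$ is the renewal sequence of \eqref{eq-reniid} (the probability that there is an exact hit of $\YY$ at time $j$) and $\ff^{c}_{m-j}=\sum_{k>m-j}\ff_k = p\,\nu(\tau>m-j)$ is the probability that the next excursion overshoots the horizon $m$ (for $j<m$), with the boundary term $j=m$ contributing $\uu_m$. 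This is the exact analogue of the classical decomposition behind the discrete arcsine law for recurrent renewal chains; the only difference is that here the chain is transient, so both factors decay and the total mass $\sum_{j\le m}\nu(\ZZ_m=j)=1$ still holds because $\uu(z)=(1-\ff(z))^{-1}$ and $\ff(1)=p<1$.

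Next I would insert the asymptotics. By Proposition~\ref{prop-main2}, $\uu_j \sim (1-p)^{-2}\ff_j = (1-p)^{-2}Cj^{-(\beta+1)}$, and by the hypothesis $\ff_k = Ck^{-(\beta+1)}(1+o(1))$ we get $\ff^{c}_{m-j}=\sum_{k>m-j}\ff_k \sim \tfrac{C}{\beta}(m-j)^{-\beta}$. Hence, for $j = \lfloor ts\rfloor$ with $s\in(0,1)$ fixed and $m = \lfloor t\rfloor$,
\[
\nu\big(\ZZ_m=j\big)\sim (1-p)^{-2}\,\frac{C^2}{\beta}\; j^{-(\beta+1)}(m-j)^{-\beta}\sim (1-p)^{-2}\,\frac{C^2}{\beta}\; t^{-(2\beta+1)}\, s^{-(\beta+1)}(1-s)^{-\beta}.
\]
Summing over $j\le m$ (a Riemann-sum argument, with the endpoints $s\to 0$ and $s\to 1$ handled by the integrability of $s^{-(\beta+1)}(1-s)^{-\beta}$ once we restrict $\beta<1$ near $s=1$ — actually $s^{-(\beta+1)}$ is \emph{not} integrable at $0$, which is why the scaling $n^q$ with $q=1/(1+2\beta)$ appears, see below) shows that the correct normalization is by $t^{\,2\beta}$, not by $m$ itself, and explains the exponent $1/q = 1+2\beta$ in the statement: writing $m=[n^q]$ and rescaling $\ZZ_m$ by $m$, the density of $\ZZ_m/m$ converges to that of $\B(\beta,1-\beta)$, i.e. proportional to $s^{\beta-1}(1-s)^{-\beta}$, \emph{after} correcting the power — the substitution $u=s^{1/q}$ turns $s^{-(\beta+1)}\,ds$-type weights into the Beta density, which is exactly the content of the change of variables $(\ZZ_{[n^q]}/n)^{1/q}$. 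I would make this precise by computing the Mellin/Laplace transform of the limit, or more elementarily by showing $\nu_0(a\le (\ZZ_{[n^q]}/n)^{1/q}\le b)\to \int_a^b \frac{1}{\pi}\,\frac{\sin\pi\beta}{?}\,u^{\beta-1}(1-u)^{-\beta}\,du$ via dominated convergence on the Riemann sums; the constant $\nu_0= C^{-2}q p^{-1}(1-p)^2\nu$ is precisely the normalizing constant that makes the limiting total mass equal to $1$, absorbing the factors $(1-p)^{-2}C^2/\beta$ and the Jacobian $q$ from the change of variables.

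The key steps in order: (1) establish the exact renewal decomposition $\nu(\ZZ_m=j)=\uu_j\,\ff^{c}_{m-j}$ for $j<m$ plus the boundary term; (2) substitute the asymptotics of $\uu_j$ (Proposition~\ref{prop-main2}) and of $\ff^c_{m-j}$; (3) perform the change of variables $u=s^{1/q}$ and identify the resulting weight with the $\B(\beta,1-\beta)$ density; (4) upgrade the pointwise/Riemann-sum convergence of the mass on intervals $[a,b]\subset(0,1)$ to convergence in measure under any $\rho\ll\nu_0$, using that convergence of the normalized counting measures on a generating family of intervals forces convergence in distribution, and that absolute continuity plus the explicit density give convergence in $\nu_0$-measure. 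The main obstacle I anticipate is step (4) together with the non-uniformity of the asymptotics near the endpoints $s\to 0,1$: the weight $s^{-(\beta+1)}$ blows up non-integrably at $0$, so one cannot naively sum; the scaling by $n^q$ with $q=1/(1+2\beta)$ is engineered exactly so that the dominant contribution to $\ZZ_{[n^q]}$ comes from $j$ of order $n$ (i.e. $s$ of order $n^{1-q}\to\infty$ relative to... ) — making this tail control rigorous, i.e. showing that the contribution of very small $j$ and of $j$ within $O(1)$ of $m$ is negligible after the rescaling, is the delicate part and will require splitting the sum at scales $\delta m \le j\le (1-\delta)m$ and estimating the complement using the monotone tail bounds $f_n=O(n^{-(\beta+1)})$ from the hypothesis. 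The recurrent-case arcsine law (Dynkin–Lamperti, Erickson) provides the template; the transient modification is conceptually routine once the decomposition in step~(1) and the asymptotics in step~(2) are in hand, so I expect the write-up to be short modulo the endpoint estimates.
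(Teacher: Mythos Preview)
Your approach is essentially the paper's: the renewal decomposition $\nu(\ZZ_m=j)=\uu_j\cdot(\text{tail})$, Proposition~\ref{prop-main2} for the asymptotics of $\uu_j$, and a change of variables leading to the Beta integral. The main difference is \emph{where} the change of variables is performed. You plan to sum $\sum_{j\le m}\uu_j\,\ff^c_{m-j}$ directly, confront the non-integrable weight $s^{-(\beta+1)}$ at $s=0$, and then repair it by endpoint splitting and the substitution $u=s^{1/q}$. The paper instead re-indexes the random variable first: it sets $\hat Z_n(x):=\max\{j\le n:\TT^{[j^q]}(x)\in\YY\}$, observes $(\ZZ_{[n^q]})^{1/q}=\hat Z_n$, and then decomposes $\nu(\hat Z_n\le (nt)^{1/q})=\sum_{j\le (nt)^{1/q}}\uu_{[j^q]}\,\nu(\tt>n-[j^q])$. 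After substituting the asymptotics, the summand is $[j^q]^{-(\beta+1)}(n-[j^q])^{-\beta}$, and the integral substitution $s^q\mapsto nu$ (with $q=1/(1+2\beta)$) yields $\int_0^t u^{\beta-1}(1-u)^{-\beta}\,du$ directly, with no endpoint surgery needed. This re-indexing is the clean device you are groping toward in step~(3); doing it at the level of the random variable rather than the integral is what makes the Riemann sum converge without ad hoc truncation.

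One genuine gap in your probabilistic picture: your claim that $\sum_{j\le m}\uu_j\,\ff^c_{m-j}+\uu_m=1$ with $\ff^c_{k}=\sum_{i>k}\ff_i$ is false. A short computation using the renewal equation gives
\[
\sum_{j=0}^{m-1}\uu_j\,\ff^c_{m-j}+\uu_m \;=\; 1-(1-p)\sum_{j=0}^{m-1}\uu_j \;\longrightarrow\; 0
\quad\text{as }m\to\infty,
\]
because $\sum_j\uu_j=(1-p)^{-1}$. The missing mass is the event that the orbit has already fallen into the hole $H$ and will never return to $\YY$; on that event the ``next excursion'' has $\tt=\infty$, contributing a constant $(1-p)$ that your $\ff^c_{m-j}$ omits. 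In particular $\ZZ_m/m\to 0$ in $\nu$-probability, so the assertion that ``the density of $\ZZ_m/m$ converges to that of $\B(\beta,1-\beta)$'' is not right as stated. This is exactly why the result is phrased as convergence under the rescaled measure $\nu_0=C^{-2}qp^{-1}(1-p)^2\nu$ rather than as an honest limit law under $\nu$: the constant in front of the Beta integral in \eqref{eq-integer1} is what $\nu_0$ is designed to absorb, not a normalization that makes total mass one. Keep the paper's convention for the tail factor (and its rescaling) and the argument goes through as you outlined.
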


\begin{proof}
Let $\hat Z_n(x):=\max\{0\leq j\leq n: \TT^{[j^q]}(x)\in\YY\}$ and note that
\begin{align}
 \label{eq-ZZ}
\nonumber (\ZZ_{[n^q]})^{1/q}=\max\{[j^{1/q}]:0\leq j\leq [n^{q}]: \TT^{j}(x)\in\YY\}&=\max\{j\in\{0,\ldots,n\} :\TT^{[j^q]}(x)\in\YY\}\\
 &=\hat Z_{n}(x).
 \end{align}

But for any $t>0$,
\[
\nu\Big(\frac{\hat Z_{n}(x)}{n^{1/q}}\le t\Big)=\nu(\hat Z_{n}(x)\leq (nt)^{1/q})=\sum_{0\leq j\leq (nt)^{1/q}}\nu(\TT^{[j^q]}\in\YY\cap\{\tt>n-[j^q]\}).
\]

Due to independence,
\begin{align*}
 \nu(\TT^{[j^q]}\in\YY\cap\{\tt>n-[j^q]\})=\nu(\{\tt>n-[j^q]\})\nu(\TT^{-[j^q]}\YY).
 \end{align*}
It is easy to see from the definition of the renewal sequence in~\eqref{eq-reniid} that $\nu(\TT^{-[j^q]}\YY)=\uu_{[j^q]}$.
 Proposition~\ref{prop-main2} together with  $\ff_n=Cn^{-(\beta+1)}(1+o(1))$ implies that
$\uu_{[n^q]} = pC(1-p)^{-2}[n^q]^{-(\beta+1)}(1+o(1))$. Putting the above together and using that\footnote{Here, we also use the convention that $j^{-\gamma}=0$ for $j=0$ and $\gamma>0$.}
$\nu(\{\tt>n\})=Cpn^{-\beta}(1+o(1))$,

\begin{align}
\label{eq-arc1}
\nonumber &\sum_{0\leq j\leq (nt)^{1/q}}\nu(\TT^{-[j^q]}\in\YY\cap\{\tt>n-[j^q]\}=C^2p(1-p)^{-2}\sum_{0\leq j\leq (nt)^{1/q}}\frac{1}{[j^q]^{\beta+1}}\frac{1}{(n-[j^q])^\beta}\\
&=C^2p^2(1-p)^{-2}\sum_{0\leq j\leq (nt)^{1/q}}\frac{1}{j^{q(\beta+1)}}\frac{1}{(n-[j^q])^\beta}+O\Big(\sum_{0\leq j\leq (nt)^{1/q}}\frac{1}{j^{2q(\beta+1)}}\frac{1}{(n-j^q)^\beta}\Big)
\end{align}
For the first term, as $n\to\infty$,
\begin{align*}
\sum_{0\leq j\leq (nt)^{1/q}}\nu(\TT^{-j^q}\in\YY\cap\{\tt>n-[j^q]\})&\to C^2p(1-p)^{-2}\int_1^{(nt)^{1/q}}\frac{1}{s^{q(\beta+1)}}\frac{1}{(n-s^q)^{\beta}}ds.
\end{align*}
Recall $q=1/(1+2\beta)$. With the substitution $s^q\to nu$
\begin{align*}
q\frac{1}{n^\beta}\int_1^{(nt)^{1/q}}\frac{1}{s^{q(1+\beta)}}\frac{1}{(1-\frac{s^q}{n})^{\beta}}ds &=\frac{n^{1/q}}{n^{\beta} n^{\beta+1}}\int_{1/n}^{t}\frac{u^{1/q-1}}{u^{\beta+1}(1-u)^{\beta}}du\\
&=\int_{1/n}^{t}\frac{1}{u^{1-\beta}}\frac{1}{(1-u)^{\beta}}du=\int_0^{t}\frac{1}{u^{1-\beta}}\frac{1}{(1-u)^{\beta}}du+O(1/n^\beta).
\end{align*}
For the second term in~\eqref{eq-arc1},  a calculation similar to the one above shows that 
\[
\sum_{0\leq j\leq (nt)^{1/q}}\frac{1}{j^{2q(\beta+1)}}\frac{1}{(n-j^q)^\beta}=O(1/n^\beta).
\]
Putting the above together, as $n\to\infty$, 
\begin{align}
\label{eq-integer1}
\nu\Big(\frac{\hat Z_{n}}{n^{1/q}}\le t\Big)\to C^2 pq^{-1} (1-p)^{-2}\int_0^{t}\frac{1}{u^{1-\beta}}\frac{1}{(1-u)^{\beta}}du.
\end{align}
The above displayed equation together with~\eqref{eq-ZZ} ends the proof for the case $\beta\in (0,1)$ of  the claimed convergence w.r.t. the measure $\nu_0= C^{-2}q p^{-1}(1-p)^{2}\nu$.
The  convergence in measure, for any probability $\nu_0$ absolutely continuous w.r.t. $\nu$, follows since the density of $\nu$ is a constant.~\end{proof}

\section{A ratio limit  for $\beta\in (0,1)$}
\label{sec-ratio}

It is known that for null recurrent renewal shifts $T:X\to X$,
 $X = \N_0^{\N_0}$ with induced shifts $T_Y=T^\tau: Y\to Y$, $Y = [0] = \{ x \in X : x_0 = 0\}$
as recalled in Subsection~\ref{subsec-renrec}, a Darling Kac law for
$S_n(1_Y)=\sum_{j=0}^{n-1}1_Y\circ T^j$ holds under regular
variation of the tail $\nu(\tau>n)$(see, for instance,~\cite{Feller66}). More precisely,
simplifying the assumption on the tail,
if $\nu(\tau>n)=Cn^{-\beta}(1+o(1))$ for some $C>0$
and $\beta\in (0,1)$, then as $n\to\infty$, $C^{-1}n^{-\beta}S_n(1_Y)\to \mathcal{M}_{\beta}$,
where $\mathcal{M}_{\beta}$ is a random variable distributed according to the Mittag Leffler
distribution\footnote{We recall that the Laplace transform of this random variable is given
by $E(e^{z\mathcal{M}_\beta})=\sum_{p=0}^\infty \Gamma(1+\beta)^pz^p/\Gamma(1+p\beta)$ 
for all $z\in\C$. }.
One way of seeing this
is to recall that: a) $\P(\tau_m\ge n)=\P(S_n(1_Y)\le m)$,
where $\tau_m=\sum_{j=0}^{m-1}\tau\circ T_Y^j$; b) under the assumption
$\nu(\tau>n)=Cn^{-\beta}(1+o(1))$, we have that as $m\to\infty$,
$m^{-1/\beta}\tau_m\to C_\beta \mathcal{Y}_\beta$,
where $\mathcal{Y}_\beta$ is a random variable in the domain of a stable law of index $\beta$
and $C_\beta$ is a constant that depends only on $C$ and $\beta$ ; c) $\mathcal{M}_{\beta}=_d\mathcal{Y}_\beta^{-\beta}$.
This type of argument for the proof of a Darling Kac law can be found, for instance, in \cite{Bingham1}, which goes back to~\cite{Feller49}.

In the case of the transient shift $\TT$ introduced in Subsection~\ref{subsec-trans}, the duality rule in point b) above does not hold. Instead, in this section we
will exploit Lemma~\ref{lemma-ratio} below and obtain the following, more or less obvious, limit behaviour on the survivor set:
\begin{prop}
\label{prop-surv} Assume the set up of Subsection~\ref{subsec-trans}, in particular~\eqref{eq-tail}.  Assume that ~\eqref{eq-ap} holds.
Suppose that
$\nu(\tau>n)=Cn^{-\beta}(1+o(1))$ with $\beta\in(0,1)$. Let $\SS_n(1_{\YY})=\sum_{j=0}^{n-1}1_{\YY}\circ \TT^j$. Then, for any $t>0$,
\[
1\le \frac{\lim_{n\to\infty}p^{n^{1/\beta}}\nu(n^{-1/\beta}\SS_n\le t\cap\XX^{n})}{\P(\mathcal{Y}_\beta \le t)}\le 1+p.
\]
\end{prop}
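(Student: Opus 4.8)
The plan is to run the same duality-style argument that is sketched just before the statement for the recurrent case, but now keeping careful track of the "killing" factor $p$ that appears along each excursion of the transient chain $\TT$. First I would set up the analogue of point a): if $\tt_m=\sum_{j=0}^{m-1}\tt\circ\TT_{\YY}^j$ denotes the time of the $m$-th return of $\TT$ to $\YY$, then on the survivor set one has the exact identity $\{\SS_n(1_{\YY})\le m\}\cap\XX^n=\{\tt_m\ge n\}\cap\XX^n$ up to the usual off-by-one adjustments, because $\SS_n\le m$ says the orbit has visited $\YY$ at most $m$ times by time $n$, i.e. the $m$-th return has not yet occurred. The point is that on $\XX^n$ the measure $\nu$ restricted to the event that the first $m$ excursions all avoid the hole has total mass governed by $p^m$: by the defining relation~\eqref{eq-tail}, $\nu(\tt=n)=p\,\nu(\tau=n)$, so a block of $k$ excursions of total length $n$ carries weight $p^k$ times the corresponding recurrent weight. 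Thus $\nu(\tt_m\ge n\cap\XX^n)=p^m\,\P(\tau_m\ge n)$, where $\tau_m$ is the recurrent sum and $\P$ refers to the i.i.d. renewal law with step distribution $f$.

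Next I would substitute $m=[t n^{1/\beta}]$ (matching the scaling $n^{-1/\beta}\SS_n\le t$, equivalently $\SS_n\le [tn^{1/\beta}]$). This gives
\[
\nu\big(n^{-1/\beta}\SS_n\le t\cap\XX^n\big)=p^{[tn^{1/\beta}]}\,\P\big(\tau_{[tn^{1/\beta}]}\ge n\big)(1+o(1)),
\]
the $(1+o(1))$ absorbing the integer-part discrepancies via the tail regular variation $\nu(\tau>n)=Cn^{-\beta}(1+o(1))$ (which, as in the proof of Theorem~\ref{thm-arc}, controls how much the renewal mass can shift when $m$ moves by $1$). By point b) of the recurrent discussion, $m^{-1/\beta}\tau_m\to C_\beta\,\mathcal Y_\beta$ in distribution, so with $m=[tn^{1/\beta}]$ we get $\P(\tau_{[tn^{1/\beta}]}\ge n)\to\P\big(C_\beta\,\mathcal Y_\beta\ge t^{-1/\beta}\big)=\P(\mathcal Y_\beta\le t)$ after the constant $C$ is normalized appropriately (the constant $C_\beta$ is absorbed into the distribution of $\mathcal Y_\beta$ exactly as in the cited statement of point b). Meanwhile $p^{[tn^{1/\beta}]}/p^{n^{1/\beta}}=p^{[tn^{1/\beta}]-n^{1/\beta}}$; when $t\ge1$ this quantity lies in $[p,1]$ and when $0<t<1$ it lies in $[1,p^{-1}]$ — wait, I should restrict to the regime implicit in the statement, namely note that $p^{n^{1/\beta}}\nu(\cdots)$ is being compared with $\P(\mathcal Y_\beta\le t)$, so the residual factor $p^{[tn^{1/\beta}]-n^{1/\beta}}$ contributes a bounded multiplicative error. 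Since $[tn^{1/\beta}]-n^{1/\beta}\in\{-1,0\}$-type corrections only up to $o(n^{1/\beta})$ are NOT available in general, the honest bound is: for $t$ in a bounded range one has $|[tn^{1/\beta}]-n^{1/\beta}|\le n^{1/\beta}$, hence $p^{[tn^{1/\beta}]-n^{1/\beta}}\in[p,\,1]$ whenever $t\in[1,?]$; the claimed two-sided bound $1\le(\cdot)/\P(\mathcal Y_\beta\le t)\le 1+p$ then follows by combining the limit of the renewal probability with these crude bounds on the residual power of $p$, together with $1\le 1/(1-(1-p))=\cdots$; I would make the constant bookkeeping precise so the ratio lands exactly in $[1,1+p]$.

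The main obstacle I expect is making the $p$-power bookkeeping sharp enough to land in the stated interval $[1,1+p]$ rather than merely "bounded". The crude estimate $p^{[tn^{1/\beta}]-n^{1/\beta}}\in[p,1]$ (for $t\ge1$) is too lossy; the clean statement suggests one should instead sum over $m$ rather than fix $m=[tn^{1/\beta}]$ — i.e. write $\nu(n^{-1/\beta}\SS_n\le t\cap\XX^n)=\sum_{m\le tn^{1/\beta}}\nu(\SS_n=m\cap\XX^n)=\sum_{m\le tn^{1/\beta}}p^m\,\P(\SS_n^{\mathrm{rec}}=m)$ and then factor $p^{n^{1/\beta}}$ out, so that the weights become $p^{m-n^{1/\beta}}$ and, because the recurrent $\SS_n^{\mathrm{rec}}$ concentrates on scale $n^\beta\ll n^{1/\beta}$ for $\beta<1$, the dominant contribution comes from $m$ near $n^{1/\beta}$ where $p^{m-n^{1/\beta}}$ is close to $1$ from below and above by controlled amounts, the geometric tail in $m$ producing precisely the extra factor $\le 1+p$ (a geometric-series $\sum_{k\ge0}p^k$-type bound truncated to give $1+p$ at the relevant order). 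I would reconcile Lemma~\ref{lemma-ratio} (referenced but not displayed in the excerpt) with this computation, as that lemma presumably packages exactly the ratio estimate $p^{n^{1/\beta}}\sum_{m}p^{m-n^{1/\beta}}(\cdots)$ needed to pin the constant; the rest is the routine stable-law limit and regular-variation error control already used in Theorem~\ref{thm-arc}.
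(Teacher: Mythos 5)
There is a genuine gap, and it sits exactly where you yourself flag trouble: the $p$-power bookkeeping. Your starting identity $\{\SS_n\le m\}\cap\XX^n=\{\tt_m\ge n\}\cap\XX^n$, leading to $\nu(\SS_n\le m\cap\XX^n)=p^m\,\P(\tau_m\ge n)$, is false. The left-hand event only requires the orbit to survive up to time $n$, whereas the event $\{\tt_m\ge n\}$ (properly intersected with the survivor set $\YY^m$ of the induced map, which is where $\tt_m$ is defined) requires survival through all $m$ complete excursions, i.e.\ up to the random time $\tt_m\ge n$. The identity $\nu(\tt_m\ge n\cap\YY^m)=p^m\nu(\tau_m\ge n)$ is correct, but it is \emph{not} equal to $\nu(\SS_n\le m\cap\XX^{n})$; the paper's Lemma~\ref{lemma-ratio} quantifies the discrepancy exactly:
\[
\nu(\tt_m\ge n\cap\YY^m)=\nu(\SS_n\le m\cap\XX^{n})\sum_{k=1}^{m}p^{m-k}\nu(S_n=k),
\]
the sum being the probability of surviving the remaining $m-S_n$ excursions beyond time $n$. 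This correction factor is the whole point of the proposition: by Abel summation on $\nu(S_n=k)=\nu(S_n\le k)-\nu(S_n\le k-1)$ the paper sandwiches it between $\nu(S_n\le m)$ and $(1+p)\nu(S_n\le m)$, and, combined with the recurrent duality $\nu(S_n\le m)=\nu(\tau_m\ge n)\to\mathrm{const}$ and the conditional stable limit $\nu(\tt_m\ge n\,|\,\YY^m)\to\P(\mathcal{Y}_\beta\le t)$ (Lemma~\ref{lemma-taumsurv}), this is precisely what produces the two endpoints $1$ and $1+p$. Your version would yield an exact limit (ratio equal to $1$), which already signals that a factor has been dropped.

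Your fallback in the last paragraph does not repair this. Writing $\nu(\SS_n\le M\cap\XX^n)=\sum_{k\le M}p^k\,\P(S_n^{\mathrm{rec}}=k)$ (itself only heuristically justified) and arguing that the mass concentrates near $k=n^{1/\beta}$ cannot work: for $\beta<1$ the recurrent $S_n$ concentrates on scale $n^\beta\ll n^{1/\beta}$ and the weights $p^k$ are decreasing, so the sum is dominated by small $k$ and no factor $p^{n^{1/\beta}}$ (nor $p^{M}$) emerges from it. The geometric series that produces $1+p$ in the paper is attached to $p^{m-k}$ with $k$ running up to $m$, so the exponent is small on the dominant terms; it is not a tail sum in $k$. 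You are right, incidentally, that the scaling in the statement is awkward: the paper's own proof works with $m=[n^\beta t]$, consistent with the Darling--Kac normalization $S_n\sim n^\beta$, rather than with $m=[tn^{1/\beta}]$ as the displayed statement suggests, so following the statement literally compounds the difficulty.
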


\begin{proof}
Write $\tt_m=\sum_{j=0}^{m-1}\tt\circ\TT_{\YY}^j$. For notational convenience, from here on we write $S_n, \SS_n$ instead of $S_n(1_Y), \SS_n(1_{\YY})$.

By Lemma~\ref{lemma-ratio} for with $[n^\beta t]=m$,
for $t>0$,
\begin{align}
\label{eq-ratio}
\frac{\nu(\tt_{[n^\beta t]}\ge n\cap\YY^{[n^\beta t]})}{\nu(\SS_n\le [n^\beta t]\cap\XX^{n})}=\sum_{k=1}^{[n^\beta t]}p^{[n^\beta t]-k}\nu(S_n=k).
\end{align}
Rewriting the RHS  using $\nu(S_n=k)=\nu(S_n\le k)-\nu(S_n\le k-1)$
\begin{align*}
\sum_{k=1}^{[n^\beta t]}p^{[n^\beta t]-k}\nu(S_n=k)&=\sum_{k=1}^{[n^\beta t]}p^{[n^\beta t]-k}\nu(S_n\le k)-p\sum_{k=0}^{[n^\beta t-1]}p^{[n^\beta t]-k}\nu(S_n\le k)\\
&=\nu(S_n\le  [n^\beta t] )+(1-p)\sum_{k=1}^{[n^\beta t-1]}p^{[n^\beta t]-k}\nu(S_n\le k).
\end{align*}
Thus
for $n$ large enough,
\begin{align*}
\nu(S_n\le  [n^\beta t] )\le \sum_{k=1}^{[n^\beta t]}p^{[n^\beta t]-k}\nu(S_n=k)&\le \nu(S_n\le  [n^\beta t]) +(1-p)\nu(S_n\le  [n^\beta t] )\sum_{k=1}^{[n^\beta t-1]}p^{[n^\beta t]-k}\\
&\le \nu(S_n\le  [n^\beta t])(1+p).
\end{align*}
Equivalently,
\begin{align*}
\nu(\tau_{ [n^\beta t]}\ge n)\le \sum_{k=1}^{[n^\beta t]}p^{[n^\beta t]-k}\nu(S_n=k)\le \nu(\tau_{ [n^\beta t]}\ge n)(1+p).
\end{align*}
Note  that since $\nu(\tau>n)=Cn^{-\beta}(1+o(1))$, for any $t>0$, we have  $\nu(\tau_{ [n^\beta t]}\ge[n^\beta t]^{1/\beta} )\to C_\beta \P(\mathcal{Y}_\beta\ge t^{1/\beta})$.
Since $\nu(\tau_{ [n^\beta t]}\ge[n^\beta t]^{1/\beta})-\nu(\tau_{ [n^\beta t]}\ge n t^{1/\beta})=o(1)$,
\[
\nu(\tau_{ [n^\beta t]}\ge n)\to C_\beta \P(\mathcal{Y}_\beta\ge 1).
\]
Putting together the previous displayed equations, there exists a constant $D_\beta$ that depends only on $C_\beta$
and $\P(\mathcal{Y}_\beta\ge 1)$ such that
\begin{equation}
\label{eq-sum}
1\le D_\beta^{-1}\sum_{k=1}^{[n^\beta t]}p^{[n^\beta t]-k}\nu(S_n=k)\le 1+p.
\end{equation}
Finally,  by Lemma~\ref{lemma-taumsurv}, $\nu(\tt_{[n^\beta t]}\ge n|\YY^{[n^\beta t]})\to \P(\mathcal{Y}_\beta \le t)$
and thus,
\[
p^{-n^{1/\beta}}\nu(n^{-1/\beta}\tt_{[n^\beta t]}\ge n\cap\YY^{[n^\beta t]})\to \P(\mathcal{Y}_\beta \le t).
\]
The conclusion follows by the above equation together with~\eqref{eq-sum} and~\eqref{eq-ratio}.~\end{proof}

For $n,m\in\N$, the result below relates $\SS_n$ to $\tt_m=\sum_{j=0}^{m-1}\tt\circ\TT_{\YY}^j$ and it can be regarded
as an analogue of item b) mentioned at the beginning of this section.
\begin{lemma}
\label{lemma-ratio}
 Assume the set up of Subsection~\ref{subsec-trans}, in particular~\eqref{eq-tail}. Then for all $n,m\in\N$,
\[
\nu(\tt_m\ge n\cap\YY^m)=\nu(\SS_n\le m\cap\XX^{n})\sum_{k=1}^{m}p^{m-k}\nu(S_n(1_Y)=k\}).
\]
~\end{lemma}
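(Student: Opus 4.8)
The plan is to decompose the event $\{\tt_m \ge n\} \cap \YY^m$ according to the last time before $n$ that the $\TT$-orbit visits $\YY$, and to exploit the renewal structure together with the tail identity~\eqref{eq-tail}. The key observation is that the map $\TT$ agrees with $T$ off the hole, and that the sequence $(\tt_j)$ records the successive return times of $\TT$ to $\YY$; staying in $\YY^m$ for the first $m$ of these returns means the $\TT$-orbit accumulates $m$ visits to $\YY$ (counted via $\TT_{\YY}$-iterates) before falling into the hole, while staying in $\XX^n$ simply means the orbit avoids $H$ up to time $n$. So I would begin by writing $\{\tt_m \ge n\}\cap\YY^m$ as the disjoint union over $k\in\{1,\dots,m\}$ of the events ``the orbit is in $\YY$ at exactly $k$ of the times $0,\dots,n-1$ (counted by $S_n$ or rather by the analogous surviving count), the $k$-th of these corresponds to some $\TT_{\YY}$-iterate, and the $(k+1)$-st return time exceeds $n-(\text{current position})$''. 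The point of indexing by $k = \SS_n$ (the number of $\YY$-visits actually realized in $[0,n)$ by the surviving orbit) is that $\{\tt_m \ge n\}$ forces $\SS_n \le m$.

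Next I would use independence in the induced chain: because the induced transition probabilities $\hat p_{\ell,k} = f_k$ (equivalently $\QQQ_Y = pQ_Y$) do not depend on the current state, the successive $\tt$-increments behave like i.i.d.\ variables with $\nu(\tt = n) = pf_{n-1}$, and the renewal sequence $\uu_n = \nu(\TT^{-n}\YY \cap \XX^n)$ factorizes cleanly. Concretely, for each $k$, the probability that the surviving orbit is in $\YY$ at a prescribed configuration of $k$ return times summing to some value $\le n$, and then survives (and stays out of $\YY$, or not—whatever the precise bookkeeping requires) for the remaining $n - (\cdot)$ steps, splits as a product: a factor counting the first $k$ returns within $[0,n)$ in the \emph{recurrent} chain (this produces $\nu(S_n(1_Y) = k)$, since each return in the recurrent chain has probability weight $f$ rather than $pf$), a factor $p^{m-k}$ accounting for the extra probability of survival needed to push from $k$ realized returns up to $m$ in the transient chain (each of the returns $k+1,\dots,m$ in the original chain contributes an $f$ which in the transient chain is $pf$, i.e.\ an extra factor $p$), and the tail factor which, after summing, reassembles into $\nu(\SS_n \le m \cap \XX^n)$. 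Collecting the three factors yields exactly
\[
\nu(\tt_m\ge n\cap\YY^m)=\nu(\SS_n\le m\cap\XX^{n})\sum_{k=1}^{m}p^{m-k}\nu(S_n(1_Y)=k).
\]

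The main obstacle I anticipate is getting the combinatorial/measure-theoretic bookkeeping exactly right: precisely which ``survival'' events attach to the first $k$ returns versus the last $m-k$, and verifying that the power of $p$ comes out as $p^{m-k}$ rather than, say, $p^{m-k-1}$ or $p^{m}$. The cleanest way to pin this down is to write everything in terms of the generating-function/renewal identities from Subsection~\ref{subsec-trans}: use $\nu(\tt = n) = p\,\nu(\tau = n)$ iterated $m$ times to get $\nu(\tt_m = \cdot) = p^m (\nu(\tau_\bullet = \cdot))$-type convolutions, then split the convolution of $m$ copies into $k$ copies (absorbed into $\nu(S_n(1_Y)=k)$, which itself is a convolution identity $\{S_n = k\} = \{\tau_k \le n < \tau_{k+1}\}$ for the recurrent chain) times $m-k$ copies contributing $p^{m-k}$ times a recurrent tail that, re-summed against $\uu$, gives $\nu(\SS_n\le m\cap\XX^n)$ via $\uu_n = \nu(\TT^{-n}\YY\cap\XX^n)$. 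Once the identity $\{S_n = k\} = \{\tau_k \le n\} \setminus \{\tau_{k+1}\le n\}$ and its transient analogue are written out, the rest is a matter of carefully matching convolution factors, and no deep input beyond the setup of Subsection~\ref{subsec-trans} is needed.
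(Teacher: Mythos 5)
Your outline follows the same route as the paper's proof: pass from $\tt_m$ to $\tau_m$ on the survivor set, invoke the recurrent duality $\{\tau_m\ge n\}=\{S_n\le m\}$, and split the survival event $\YY^m$ into survival of the first $S_n$ returns (which, intersected with $\{S_n\le m\}$, is $\{\SS_n\le m\}\cap\XX^{n}$) and survival of the remaining $m-S_n$ returns (which contributes $p^{m-S_n}$ by $T_Y$-invariance and the memorylessness of the killing). So the ingredients are the right ones.

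However, the proposal stops exactly where the proof has to do its work. The identity to be proved is a \emph{product} of two probabilities, $\nu(\SS_n\le m\cap\XX^{n})$ times $\sum_{k=1}^{m}p^{m-k}\nu(S_n=k)$, and the first factor is itself (up to conventions) a sum over $k$ of terms weighted by $p^{k}$. A decomposition of the left-hand side over the values $k$ of $S_n$, which is what you describe, produces a single sum $\sum_k(\cdots)$; converting that into a product of two sums is precisely the factorization
\[
\P\bigl(S_n\le m\cap\YY^{S_n}\cap\YY^{m-S_n}\bigr)
=\P\bigl(\SS_n\le m\cap\XX^{n}\bigr)\,
\P\bigl(y\in Y:\ T_Y^{S_n(y)}(y)\in\YY^{m-S_n(y)}\bigr),
\]
which is the one nontrivial step of the paper's argument: it requires an independence statement for two events that both involve the random index $S_n$, and it is what the paper then evaluates, via $T_Y$-invariance, as $\sum_{k=1}^{m}p^{m-k}\nu(S_n=k)$. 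You explicitly flag this step as ``the main obstacle'' and do not carry it out. Moreover, your description of the per-$k$ term as a product of three factors, one of which ``after summing reassembles into $\nu(\SS_n\le m\cap\XX^{n})$'', cannot be literally correct: a quantity cannot simultaneously be summed over $k$ to form one factor and also multiply each $k$-term of the other factor. So as written the proposal identifies the correct ingredients but does not prove the identity; the decisive factorization must be stated and justified, and that is where all the content of the lemma lies.
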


\begin{proof} 
Using that in the recurrent case  $\P(\tau_m\ge n)=\P(S_n\le m)$ (for any probability measure $\P$ on $Y$), we compute that
\begin{align*}
\P(\tt_m\ge n\cap\YY^m)&=\P(\tau_m\ge n\cap\YY^m)=\P(S_n\le m\cap\YY^m)=\P(S_n\le m\cap\YY^{S_n}\cap\YY^{m-S_n} )\\
&=\P(\SS_n\le m\cap\XX^{n})\P(y\in Y: S_n(y)< m, T_Y^{S_n(y)}(y)\in \YY^{m-S_n(y)})\\
&=\P(\SS_n\le m\cap\XX^{n})\P(y\in Y:  T_Y^{S_n(y)}(y)\in \YY^{m-S_n(y)})\\
&=\P(\SS_n\le m\cap\XX^{n})\sum_{k=1}^{m}\P(y\in Y:  T_Y^k(y)\in\YY^{m-k}\cap\{y\in Y:S_n(y)=k\}).
\end{align*}
Clearly, the events $\{y\in Y:  T_Y^k(y)\in\YY^{m-k}\}$ and $\{y\in Y:S_n(y)=k\}$ are disjoint. Recalling that
$\nu$ is $T_Y$  invariant, $\nu(\{y\in Y:  T_Y^k(y)\in\YY^{m-k}\})=\nu(\YY^{m-k})=p^{m-k}$.
Thus,
\[
\sum_{k=1}^{m}\P(y\in Y:  T_Y^k(y)\in\YY^{m-k}\cap\{y\in Y:S_n(y)=k\})=\sum_{k=1}^{m}p^{m-k}\nu(\{y\in Y:S_n(y)=k\})
\]
and the conclusion follows.~\end{proof}

\begin{lemma}
\label{lemma-taumsurv}
Assume the set up of Subsection~\ref{subsec-trans}, in particular~\eqref{eq-tail}.  Assume that ~\eqref{eq-ap} holds.
Suppose that
$\nu(\tau>n)=Cn^{-\beta}(1+o(1))$ with $\beta\in(0,1)$. Then
\[
\nu(\tt_{m}\ge n| \YY^{m})\to \P(\mathcal{Y}_\beta \le t).
\]
\end{lemma}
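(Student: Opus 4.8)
The plan is to reduce the conditional statement to a standard stable-limit computation for the recurrent induced chain, using the fact established in Subsection~\ref{subsec-trans} that $\nu$ is invariant both under $T_Y$ and under $\TT_\YY$, and that the two return-time laws are related by the single constant $p$ via~\eqref{eq-tail}. Concretely, first I would observe that conditioning on $\YY^m=\{y:\TT_\YY^j(y)\in\YY,\ 0\le j<m\}$ and dividing by $\nu(\YY^m)=p^m$ turns the left-hand side into
\[
\nu(\tt_m\ge n\mid\YY^m)=\frac{\nu(\tt_m\ge n\cap\YY^m)}{p^m}.
\]
The key point is that on the event $\YY^m$ the orbit segment $\TT_\YY^0,\dots,\TT_\YY^{m-1}$ never falls in the hole, so $\tt_m$ restricted to $\YY^m$ has exactly the law of $\tau_m=\sum_{j=0}^{m-1}\tau\circ T_Y^j$ for the recurrent chain, \emph{weighted} by the indicator of avoiding the hole; since by~\eqref{eq-tail} each step contributes the factor $p$ independently of the increment value, one gets
\[
\nu(\tt_m\ge n\cap\YY^m)=p^m\,\P_\nu(\tau_m\ge n),
\]
where $\P_\nu$ refers to the recurrent chain with $\nu$ stationary. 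Hence the conditional probability is simply $\P_\nu(\tau_m\ge n)$, and the hole has disappeared.

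Next I would invoke item b) recalled at the start of Section~\ref{sec-ratio}: under $\nu(\tau>n)=Cn^{-\beta}(1+o(1))$ with $\beta\in(0,1)$, the return sums satisfy $m^{-1/\beta}\tau_m\to C_\beta\mathcal{Y}_\beta$ in distribution, where $\mathcal{Y}_\beta$ is in the domain of a one-sided stable law of index $\beta$. With the intended coupling $m=[n^\beta t]$ (inherited from the way the lemma is used in the proof of Proposition~\ref{prop-surv}), one has $m^{1/\beta}\sim n t^{1/\beta}$, so
\[
\P_\nu(\tau_m\ge n)=\P_\nu\bigl(m^{-1/\beta}\tau_m\ge n m^{-1/\beta}\bigr)\longrightarrow \P\bigl(C_\beta\mathcal{Y}_\beta\ge t^{-1/\beta}\bigr),
\]
and after absorbing $C_\beta$ into the normalization of $\mathcal{Y}_\beta$ (exactly as is done silently in the Proposition~\ref{prop-surv} proof, where $\nu(\tau_{[n^\beta t]}\ge[n^\beta t]^{1/\beta})\to C_\beta\P(\mathcal{Y}_\beta\ge t^{1/\beta})$ and the discrepancy between $[n^\beta t]^{1/\beta}$ and $nt^{1/\beta}$ is shown to be $o(1)$) this limit is $\P(\mathcal{Y}_\beta\le t)$ after the change of variables $\mathcal{Y}_\beta\mapsto\mathcal{Y}_\beta^{-\beta}$ encoded in item c) of that section. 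I would lift the identification $\tt_m\ge n$ vs.\ its normalized form via the same $o(1)$ estimate used there, so the statement reads verbatim as in the lemma once one agrees $m=[n^\beta t]$.

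The main obstacle is bookkeeping rather than anything deep: making precise the claimed factorization $\nu(\tt_m\ge n\cap\YY^m)=p^m\P_\nu(\tau_m\ge n)$ requires writing out the joint law of the increments $\tt\circ\TT_\YY^j$ on $\YY^m$ and checking that the ``avoid the hole'' constraint decouples from the value of each increment — this is where~\eqref{eq-tail} and the remark that $\QQQ_Y=pQ_Y$ (same left eigenvector $\nu$, eigenvalue $p$ instead of $1$) do the work, since under $\nu$ the increments are i.i.d.\ with law $\ff_{n-1}=pf_{n-1}$, i.e.\ $p\cdot(\text{recurrent law})$. The only other point needing a line of care is that the limit in the lemma is stated with a free parameter $t$ that does not appear on the left-hand side; I would note that the lemma is invoked in Proposition~\ref{prop-surv} with the substitution $m=[n^\beta t]$, $n\mapsto n$ already performed, and state the lemma accordingly (or, equivalently, phrase it as $\nu(\tt_{[n^\beta t]}\ge n\mid\YY^{[n^\beta t]})\to\P(\mathcal{Y}_\beta\le t)$), so no genuine difficulty remains beyond the standard stable-domain convergence already cited.
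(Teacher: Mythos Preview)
Your proposal is correct and rests on the same key observation as the paper: conditioning on the survivor set $\YY^m$ strips off the factor $p^m$ and leaves exactly the recurrent law of $\tau_m$, after which the standard one-sided stable limit applies. The difference is in packaging. You argue the factorization $\nu(\tt_m\ge n\cap\YY^m)=p^m\,\P_\nu(\tau_m\ge n)$ directly from the i.i.d.\ structure and~\eqref{eq-tail}, then quote item b) of Section~\ref{sec-ratio}. The paper instead works with characteristic functions and the normalized operator $\tilde R=p^{-1}\RR$ (in the spirit of the quasi-stationary CLT in~\cite{CLM}): it shows $\mathbb{E}_\nu(e^{i\theta\tt}\mid\YY)=\sum_n f_n e^{in\theta}$, i.e.\ the conditional characteristic function coincides with that of the recurrent $\tau$, and then exponentiates $m\log(1-C_\beta(\theta/m^{1/\beta})^\beta(1+o(1)))$ to get the stable characteristic function. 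Your route is more elementary and arguably cleaner for the i.i.d.\ renewal setting; the paper's operator/Fourier formulation is chosen with an eye toward the generalization to non-independent dynamical systems flagged in the introduction. Your remark about the implicit substitution $m=[n^\beta t]$ and the constant $C_\beta$ being absorbed is well taken---the paper's statement is indeed elliptical on this point, and your reading matches how the lemma is actually invoked in Proposition~\ref{prop-surv}.
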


\begin{proof}Since we condition on the survivor set $ \YY^{m}$,
the required   argument is standard
and we sketch it here only for completeness.
It can be regarded as a straightforward modification of, for instance,
the argument used in the proof of the central limit theorem for Markov chains with quasi stationary distributions~\cite[Theorem 3.4]{CLM}.

Let $\RR$ be the matrix with entries given by~\eqref{eq-matrix-indh}. Let $r=d\nu/d Leb$ and note that  $r$ is constant
on $Y=\cup_{k\ge 0}C_k$. Also, we note that in the set up of Subsection~\ref{subsec-trans},
$\RR r=p r$ and $\RR (e^{i\theta\tt}r)=p  e^{i\theta\tt}r$, $\theta\in [-\pi,\pi)$. Next, let $\tilde R=p^{-1}\RR$ be the normalization of $\RR$ and note that
for $m\ge 0$,
\[
\mathbb{E}_\nu(e^{i(\theta/m^{1/\beta})\tt_m}|\YY^m)=\int_{\YY^m}\tilde R^m r e^{i(\theta/m^{1/\beta})\tt_m}\, dLeb=p^{-m}\int_{\YY^m} e^{i(\theta/m^{1/\beta})\tt_m}\, d\nu.
\]
For $m=1$, using the notation in~\eqref{eq-tail}, $\mathbb{E}_\nu(e^{i\theta\tt}|\YY)=p^{-1}\sum_{n=0}^\infty \ff_n e^{in\theta}=\sum_{n=0}^\infty f_n e^{in\theta}$.
Since by assumption, $\sum_{j>n}f_j=\nu(\tau>n)=Cn^{-\beta}(1+o(1))$ with $\beta\in(0,1)$, as $\theta\to 0$
\[
1-\mathbb{E}_\nu(e^{i\theta\tt}|\YY)=C_\beta\theta^\beta(1+o(1)),
\]
where $C_\beta$ is a constant that depends only on $C$ and $\beta$ (see, for instance,~\cite{Feller66}). Thus,
\[
\mathbb{E}_\nu(e^{i(\theta/m^{1/\beta})\tt_m}|\YY^m)=\exp(m\log (\mathbb{E}_\nu(e^{i(\theta/m^{1/\beta})\tt}|\YY)))=e^{C_\beta\theta^\beta}(1+o(1)),
\]as required.~\end{proof}

\appendix

\section{A result used in Proofs of Propositions~\ref{prop-main} and~\ref{prop-main2}}

In this appendix, we use ``big O'' and $\ll$ notation interchangeably, writing
$A_n=O(a_n)$ or $A_n\ll a_n$ as $n\to\infty$ if there is a constant
$C>0$ such that $\|A_n\|\le Ca_n$ for all $n\ge1$ (for $A_n$ operators
and $a_n\ge0$ scalars). 

\begin{lemma}\label{lemma-abstr}
Let $A(z)$ and $B(z)$ be operator valued functions on some function space with norm $\|\,\|$,
analytic  on $\D$ such that $A(1)=B(1)=0$. Suppose that the coefficients $A_n, B_n$ of
$A(z), B(z)$, $z\in\D$ are such that $\|A_n\|\ll \|B_n\|\ll n^{-(\beta+1)}$ for some $\beta>0$.

Define $C(z)=(1-z)^{-1}A(z)B(z)$, $z\in\D$. Then the coefficients of $C(z)$, $z\in\D$ satisfy
$\|C_n\|\ll n^{-2\beta}$ if $\beta<1$, $\|C_n\|\ll (\log n)n^{-2}$ if $\beta=1$
 and $\|C_n\|\ll n^{-(\beta+1)}$ if $\beta\geq 1$.
\end{lemma}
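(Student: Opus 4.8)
The plan is to use the vanishing $A(1)=0$ to absorb the factor $(1-z)^{-1}$, turning $C(z)$ into a single Cauchy product, and then to estimate that product by splitting the convolution range, invoking $B(1)=0$ on the part where the crude bound is too weak.

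First I would record some preliminaries. Since $\beta>0$, the hypothesis $\|A_n\|\ll\|B_n\|\ll n^{-(\beta+1)}$ makes both coefficient sequences absolutely summable, so $A$ and $B$ extend continuously to $\bar\D$, the numbers $A(1)=\sum_n A_n$ and $B(1)=\sum_n B_n$ are the ones appearing in the hypothesis, and the tail operators $\tilde A_k:=\sum_{j>k}A_j$, $\tilde B_k:=\sum_{j>k}B_j$ are well defined with $\|\tilde A_k\|\ll(1+k)^{-\beta}$ and $\|\tilde B_k\|\ll(1+k)^{-\beta}$ (the bound at $k=0$ uses $A(1)=0$, resp.\ $B(1)=0$, to write $\tilde A_0=-A_0$, $\tilde B_0=-B_0$). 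Comparing Taylor coefficients gives the identity $A(z)=-(1-z)\tilde A(z)$ on $\D$, hence $C(z)=(1-z)^{-1}A(z)B(z)=-\tilde A(z)B(z)$ and therefore $C_n=-\sum_{k=0}^n\tilde A_k B_{n-k}$. Keeping $\tilde A$ on the left and $B$ on the right throughout is exactly what makes the argument go through for operator-valued functions.

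Next, fix $n$, set $M=\lfloor n/2\rfloor$, and split $C_n=-\sum_{0\le k<M}\tilde A_k B_{n-k}-\sum_{M\le k\le n}\tilde A_k B_{n-k}=:\mathrm I+\mathrm{II}$. For $\mathrm I$ every index has $n-k>n/2$, so $\|B_{n-k}\|\ll n^{-(\beta+1)}$ and $\|\mathrm I\|\ll n^{-(\beta+1)}\sum_{k=0}^{M-1}(1+k)^{-\beta}$, the last sum being $O(n^{1-\beta})$, $O(\log n)$, or $O(1)$ according as $\beta<1$, $\beta=1$, or $\beta>1$; this already produces the bounds $n^{-2\beta}$, $(\log n)n^{-2}$, $n^{-(\beta+1)}$ for $\mathrm I$. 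For $\mathrm{II}$, putting $l=n-k\le\lceil n/2\rceil$ and writing $\tilde A_{n-l}=\tilde A_n+(\tilde A_{n-l}-\tilde A_n)$, I would split $\mathrm{II}=-\tilde A_n\bigl(\sum_{0\le l\le n-M}B_l\bigr)-\sum_{0\le l\le n-M}(\tilde A_{n-l}-\tilde A_n)B_l$. In the first summand $\sum_{0\le l\le n-M}B_l=B(1)-\sum_{l>n-M}B_l=-\tilde B_{n-M}$ because $B(1)=0$, so its norm is $\ll n^{-\beta}\cdot n^{-\beta}=n^{-2\beta}$, which is $\ll n^{-(\beta+1)}$ when $\beta\ge1$. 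In the second summand $\tilde A_{n-l}-\tilde A_n=\sum_{j=n-l+1}^{n}A_j$, and since $l\le\lceil n/2\rceil$ every such $j$ is comparable to $n$, so $\|\tilde A_{n-l}-\tilde A_n\|\ll l\,n^{-(\beta+1)}$; summing against $\|B_l\|\ll(1+l)^{-(\beta+1)}$ (the $l=0$ term being zero) gives $\ll n^{-(\beta+1)}\sum_{1\le l\le n/2}l^{-\beta}$, the same three regimes as for $\mathrm I$. Collecting the estimates for $\mathrm I$ and $\mathrm{II}$ yields the stated bounds, with the phase transition at $\beta=1$ coming precisely from $\sum_{k\le n/2}(1+k)^{-\beta}$.

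The one genuine obstacle is the range $k\approx n$ in $C_n=-\sum_k\tilde A_kB_{n-k}$: there the crude inequality $\|\tilde A_kB_{n-k}\|\le\|\tilde A_k\|\,\|B_{n-k}\|\ll n^{-\beta}(1+n-k)^{-(\beta+1)}$ only sums to $O(n^{-\beta})$, far weaker than the claim. The resolution, carried out in the treatment of $\mathrm{II}$ above, is that on this range $\tilde A_k$ is essentially frozen at the value $\tilde A_n$, while $\sum_{l\text{ small}}B_l$ is a tail of the convergent series $\sum_lB_l=B(1)=0$; extracting the frozen value and exploiting $B(1)=0$ gains the extra factor $n^{-\beta}$ and brings this contribution down to $n^{-2\beta}$. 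Everything else is a routine estimate of convolutions of power-law sequences.
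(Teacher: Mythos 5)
Your proof is correct, and it takes a genuinely different route from the paper's. The paper differentiates: it writes $\|C_n\|\ll n^{-1}\|C'_n\|$, expands $C'$ by the product rule into three terms, observes that $A(1)=B(1)=0$ make the coefficients of $(1-z)^{-1}A(z)$ and $(1-z)^{-1}B(z)$ equal to tail sums and hence $O(n^{-\beta})$, notes $\|A'_n\|,\|B'_n\|\ll n^{-\beta}$, and then reads off $C'_n$ from standard convolution asymptotics for two sequences of order $n^{-\beta}$. The effect of differentiating is to \emph{symmetrize}: every term becomes a convolution of two $O(n^{-\beta})$ sequences, so one uniform estimate finishes the proof. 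You instead absorb $(1-z)^{-1}$ once, via $A(z)=-(1-z)\tilde A(z)$, reducing to the single asymmetric convolution $C_n=-\sum_k\tilde A_kB_{n-k}$ with $\|\tilde A_k\|\ll(1+k)^{-\beta}$ and $\|B_{n-k}\|\ll(1+n-k)^{-(\beta+1)}$; the price of the asymmetry is that the range $k\approx n$ needs the extra summation-by-parts step (freezing $\tilde A_k$ at $\tilde A_n$ and using $B(1)=0$ to convert $\sum_{l\le n-M}B_l$ into a tail), which you carry out correctly, and I checked that all three pieces land in the right regime for each of $\beta<1$, $\beta=1$, $\beta>1$. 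What your version buys is that it is entirely self-contained and elementary (no derivative coefficients, no appeal to convolution asymptotics as a black box) and it isolates exactly where each of the hypotheses $A(1)=0$ and $B(1)=0$ is used; what the paper's version buys is brevity, since after differentiation no range-splitting is needed. One cosmetic remark: your parenthetical that the bound $\|\tilde A_0\|\ll1$ "uses $A(1)=0$" is unnecessary, since $\tilde A_0=\sum_{j\ge1}A_j$ is bounded by absolute summability alone; the vanishing at $1$ is only genuinely needed for the identity $(1-z)^{-1}A(z)=-\tilde A(z)$ to have the stated coefficient at $z^0$ and for the tail identity $\sum_{l\le n-M}B_l=-\tilde B_{n-M}$.
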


\begin{proof}During this proof $A', B', C'$ denote the first derivatives of $A, B, C$
and $A'_n, B'_n, C'_n$ denote the $n$-th coefficient of these functions on
$\D$. 

Clearly, $\|C_n\|\ll n^{-1}\|C'_n\|$. It remains to estimate the coefficients
of $C'(z)$. An easy calculations shows that
\begin{align*}
C'(z)=A'(z)\frac{B(z)}{1-z}+\frac{A(z)}{1-z}B'(z)+\frac{A(z)}{1-z}\frac{B(z)}{1-z}.
\end{align*}
Since $B(1)=0$,
$(1-z)^{-1}B(z)=\sum_{n}(\sum_{j\geq n}B_j) z^j$. Hence, the coefficients (in norm $\|\,\|$)
of $(1-z)^{-1}B(z)$ are $O(n^{-\beta})$. Similarly, the coefficients
of $(1-z)^{-1}A(z)$ are $O(n^{-\beta})$. Also, by assumption, $\|A'_n\|\ll\|B_n'\|\ll n^{-\beta}$. 
Putting these together by convolving the coefficients of the factors corresponding to
the three terms in the expression above of $C'(z)$,
\begin{align*}
\|C_n'\|\ll\begin{cases}
n^{1-2\beta}, & 0<\beta\leq\frac12, \\ n^{-(2\beta-1)}, & \frac12<\beta<1,
\\ (\log n)n^{-1}, &\beta=1,
 \\ n^{-\beta}, & \beta>1,
\end{cases}
\end{align*}
and the conclusion follows.~\end{proof}

\end{document}